\newcommand{\ZZ}{\mathbb{Z}}
\newcommand{\QQ}{\mathbb{Q}}
\newcommand{\PP}{\mathbb{P}}
\newcommand{\CC}{\mathbb{C}}
\newcommand{\FF}{\mathcal{F}}
\newcommand{\OO}{\mathcal{O}}
\newcommand{\Ex}{\mathcal{E}}
\newcommand{\Ts}{\mathcal{A}}
\newcommand{\Db}{\mathcal{D}^b}
\newcommand{\mns}{{\scalebox{0.6}[1.0]{$-$}}}
\DeclareMathOperator{\rk}{rk}
\DeclareMathOperator{\coker}{coker}
\DeclareMathOperator{\Hom}{Hom}
\DeclareMathOperator{\Ext}{Ext}
\DeclareMathOperator{\TV}{TV}
\DeclareMathOperator{\Pic}{Pic}
\DeclareMathOperator{\Aug}{Aug}
\DeclareMathOperator{\ch}{ch}
\theoremstyle{theorem}
\newtheorem{thm}{Theorem}[section]
\newtheorem*{thm*}{Theorem}
\newtheorem{prop}[thm]{Proposition}
\newtheorem{lem}[thm]{Lemma}
\newtheorem*{conj}{Conjecture}
\theoremstyle{definition}
\newtheorem{defn}[thm]{Definition}
\newtheorem{rmk}[thm]{Remark}
\newtheorem{ex}[thm]{Example}
\newtheorem*{ex*}{Example}
\newcommand{\vimende}{\color{white} \\ \color{gray} \thispagestyle{empty} \vfill\noindent\texttt{:wq}}
\newcommand{\tred}[1]{\textcolor{red}{#1}}
\newcommand{\tblue}[1]{\textcolor{blue}{#1}}
\author{Andreas Hochenegger}
\address{Institut f\"ur Mathematik und Informatik,
         Freie Universit\"at Berlin,
         Arnimallee 3,
         14195 Berlin, Germany}
\email{hochen@math.fu-berlin.de \vimende}
\tikzstyle{mylines}=[thick]
\newcommand{\figHirzeFan}{
\begin{tikzpicture}
\draw[thick] (2.5,2.5) -- (0.5,2.5) node[anchor=south]{$(\mns1,0)$};
\draw[thick] (2.5,0.5)  node[anchor=west]{$(0,\mns1)$} -- (2.5,4.5) node[anchor=east]{$(0,1)$};
\draw[thick] (2.5,2.5) -- (3.25,4.25) node[anchor=west]{$(1,r)$};
\end{tikzpicture}
}
\newcommand{\figHirzeIntersect}{
\begin{tikzpicture}
\draw[thick] (0.5,4) .. controls (2.5,3.5) .. node[anchor=south]{$\mns r$} (4,4);
\draw[thick] (0.5,4.5) .. controls (1.4,2.5) .. node[anchor=east]{$0$} (0.5,0.5);
\draw[thick] (0.5,1) .. controls (2.5,1.5) .. node[anchor=north]{$r$} (4.5,1);
\draw[thick] (4.75,0.5) .. controls (3,2.5) .. node[anchor=west]{$0$} (3.5,4.5);
\fill[pattern=north west lines] (0.9,1.2) .. controls (1.4,2.6) .. (0.9,3.8) .. controls (2.5,3.4) .. (3.2,3.6) .. controls (3,2.5) .. (4,1.2) .. controls (2.5,1.6) .. (0.9,1.2);
\fill[color=white] (2,2.2) rectangle (2.6,2.8);
\draw (2.3,2.5) node {$T$};
\end{tikzpicture}
}
\newcommand{\figBlowdownX}{
\begin{tikzpicture}
\begin{scope}[xshift=0cm,yshift=4cm]
\draw (0,1) node [anchor=east] {\tiny$D_4$} -- (1,1);
\draw[color=blue] (1,1) -- (2,1) node [anchor=west] {\tiny$D_1$};
\draw (0.1,0.1) node [anchor=north east] {\tiny$D_5$} -- (1,1) -- (1.9,1.9) node [anchor = south west] {\tiny$D_2$};
\draw[color=blue] (1,0) node [anchor=north] {\tiny$D_6$} --  (1,1);
\draw (1,1) -- (1,2) node [anchor=south] {\tiny$D_3$};
\draw[color=red] (1.9,0.1) node [anchor=north west] {\tiny$D_7$} -- (1,1);
\draw (1,-0.5) node[anchor=north] {\tiny $X = X_4 = \TV(\tblue{\mns 2},\mns 1,\mns 1,\mns 1,\mns 1,\tblue{\mns 2},\tred{\mns 1})$};
\end{scope}
\draw[thick] [->] (3,5) -- (4,5);
\begin{scope}[xshift=5cm,yshift=4cm]
\draw[color=blue] (0,1) -- (1,1);
\draw (1,1) -- (2,1);
\draw[color=red] (0.1,0.1) -- (1,1);
\draw (1,1)  -- (1.9,1.9);
\draw[color=blue] (1,0) --  (1,1);
\draw (1,1) -- (1,2);
\draw (1,-0.5) node[anchor=north] {\tiny $X_3 = \TV(\mns 1,\mns 1,\mns 1,\tblue{\mns 1},\tred{\mns 1},\tblue{\mns 1})$};
\end{scope}
\draw[thick] [->] (6,3) -- (6,2.25);
\begin{scope}[xshift=5cm,yshift=0cm]
\draw (0,1) -- (1,1);
\draw[color=red] (1,1) -- (2,1);
\draw[color=blue] (1,1)  -- (1.9,1.9);
\draw[color=blue] (1,0) --  (1,1);
\draw (1,1) -- (1,2);
\draw (1,-0.5) node[anchor=north] {\tiny $X_2 = \TV(\tred{\mns 1},\tblue{\mns 1},\mns 1, 0,\tblue{0})$};
\end{scope}
\draw[thick] [->] (4,1) -- (3,1);
\begin{scope}[xshift=0cm,yshift=0cm]
\draw (0,1) -- (1,1);
\draw (1,1)  -- (1.9,1.9);
\draw (1,0) --  (1,1);
\draw (1,1) -- (1,2);
\draw (1,-0.5) node[anchor=north] {\tiny $\FF_1 = X_1 = \TV(0,\mns 1,0,1)$};
\end{scope}
\end{tikzpicture}
}
\newcommand{\figAutoX}{
\begin{tikzpicture}
\draw[dashed] (0,2) -- (2.2,-0.2);
\draw [<->] (0,1.25) .. controls (0.2,1.8) .. (0.75,2) node[anchor=south] {\tiny$f$};
\draw[thick] (0,1) -- (1,1);
\draw[thick] (1,1) -- (2,1) node [anchor=west] {\tiny$D_1$};
\draw[thick] (0.1,0.1) -- (1,1) -- (1.9,1.9);
\draw[thick] (1,0) node [anchor=north] {\tiny$D_6$} --  (1,1);
\draw[thick] (1,1) -- (1,2);
\draw[thick] (1.9,0.1) -- (1,1);
\end{tikzpicture}
}
\title[Exceptional Sequences and Spherical Twists]{Exceptional Sequences of Line Bundles and Spherical Twists -- a Toric Example}
\begin{document}

\maketitle

\begin{abstract}
Exceptional sequences of line bundles on a smooth projective toric surface are automatically full when they can be constructed via augmentation. By using spherical twists, we give examples that there are also exceptional sequences which can not be constructed this way but are nevertheless full.
\end{abstract}

\section{Introduction}
Let $X$ be a (smooth projective) toric surface. The complement of the acting torus $T = (\CC^\ast)^2$ forms a circular sequence of the $T$-invariant divisors $D_1, \ldots, D_n$, which satisfy certain intersection conditions and whose sum is the anticanonical divisor $\mns K_X$. A configuration of divisors $\Ts = (A_1, \ldots, A_n)$ which satisfy the same conditions is a so-called \emph{toric system} as introduced by L. Hille and M. Perling in \cite{hille:perling:08}. 
The key point to notice here is that any exceptional sequence of line bundles gives rise to such a toric system, which is then called exceptional, too. Moreover, given a blow-up $X' \to X$ in a point, there is a process called \emph{augmentation} to build toric systems $\Aug_i \Ts$ on $X'$ from a given toric system $\Ts$ on $X$. This process preserves the exceptionality of a toric system.
Finally, we can associate a toric variety $\TV(\Ts)$ to any toric system $\Ts$.

The birational geometry of toric surfaces is rather easy. We can obtain any (smooth projective) toric surface $X$ from a Hirzebruch surface $\FF_{\!r}$ by finitely many blow-ups, as long as $X \not= \PP^2$. Augmentation is quite similar to blow-ups, in that $\TV(\Aug_i \Ts)$ is a blow-up of $\TV(\Ts)$.
Therefore, the question arises whether any exceptional toric system on $X \not= \PP^2$ is \emph{constructible}, i.e. can be obtained from an exceptional toric system on $\FF_{\!r}$ by finitely many augmentations.
In \cite{me:ilten:11b}, N. Ilten and the author showed that this is true for toric surfaces of Picard rank $3$ and $4$, whereas for Picard rank $5$, they found a non-constructible exceptional toric system.

One subtle detail was omitted there, namely, whether the associated exceptional sequence is \emph{full}, i.e. generates the bounded derived category of coherent sheaves on that surface. For constructible toric systems, this is automatically true. The aim of this paper is to show that this example is full by using an autoequivalence of the derived category which maps the non-constructible exceptional sequence to a full one. By \cite{broom:ploog} of N. Broomhead and D. Ploog, only a spherical twist can serve for that purpose.

We give a short outline of the paper. Section~\ref{sec:preliminaries} contains basics on toric surfaces, as well as necessary results on toric systems. The main point is that toric systems with the same associated toric surface differ by special base changes of $\Pic(X)$, called \emph{$K$-isometries}. The group of $K$-isometries is known for low Picard ranks explicitly and is finite. In Section~\ref{sec:sphtwist} we have a closer look at spherical twists. We describe which twists preserve line bundles. Actually, these twists can be considered as lifts of certain $K$-isometries to the derived category. In Section~\ref{sec:nonconstr:exseq} we investigate an orbit of these $K$-isometries on a certain toric surface and discover two non-constructible exceptional sequences. We show that applying a spherical twist to these makes them constructible. As a consequence, they are full.

\subsection*{Acknowledgments} The author would like to thank David Ploog for useful discussions and helpful comments.
Moreover, the author would like to thank Nathan Ilten for carefully reading a preliminary version of this paper.

\section{Preliminaries}
\label{sec:preliminaries}

We will highlight here some facts about toric surfaces mostly to fix notation.
For details, we refer the reader to the ample literature, like \cite{danilov}, \cite{oda}, \cite{fulton:93a} or \cite{cox:little:schenk}.

All toric surfaces in this article will be smooth and projective. 
Let $X$ be such a surface. Then $X$ is $\TV(\Sigma)$ for a fan $\Sigma$ in $N_\QQ = N \otimes \QQ$, where $N \cong \ZZ^2$ is a $2$-dimensional lattice, such that
\begin{itemize}
\item (Smoothness) For each $2$-dimensional cone $\sigma \in \Sigma$, the set of generators $v_i$ of the rays $\rho_i$ of $\sigma$ forms a lattice basis of $N$, and,
\item (Completeness) $|\Sigma| = \bigcup_{\sigma \in \Sigma} \sigma = N_\QQ$.
\end{itemize}
Actually, to get some picture of $X$ we can look at the divisors, which are invariant under the action of the $2$-dimensional torus $T = (\CC^\ast)^2$. To each such prime divisor $D_i$ corresponds exactly one ray $\rho_i$ of the fan $\Sigma$. We will always enumerate the rays $\rho_1, \ldots, \rho_n$ in a cyclic manner. 
With that in mind, the $T$-invariant divisors satisfy the following properties
\begin{itemize}
\item (Intersection) $D_i.D_j = \begin{cases} 1 & \text{if $D_i$ and $D_j$ are adjacent, or,} 
\\ 0 & \text{if they are neither equal nor adjacent.} \end{cases}$
\item (Canonical divisor) $\sum_i D_i = \mns K_X$.
\end{itemize}
\begin{figure}
\figHirzeFan
\hspace{2cm}
\figHirzeIntersect
\caption{The Hirzebruch surface $\FF_{\!r}$.}
\label{fig:hirze}
\end{figure}
It's worth noting here that $X$ is completely determined by the self-intersection numbers $a_i = D_i^2$, since the rays satisfy the relation $v_{i-1} + a_i v_i + v_{i+1} = 0$~\footnote{Here (and subsequently) we consider the index, say $i$, of such cyclically arranged objects as an element of $\ZZ/n\ZZ = \{1,\ldots,n\}$.}. So we will sometimes write $X = \TV(a_1,\ldots,a_n)$.
Another point we want to note here is that $n = \rk K_0(X)$.

\subsection*{Toric Systems}
We review now a generalisation of the above observations about the $T$-invariant divisors as done in \cite{hille:perling:08}, and we refer the reader to this article for details. 

We call a sequence $\Ts=(A_1,\ldots,A_n)$ of divisors on $X$ of length $n= \rk K_0(X)$ a \emph{toric system} if its elements satisfy the conditions (Intersection) and (Canonical divisor) above.
To such a system $\Ts$, it is possible to associate a (smooth, projective) toric surface $\TV(\Ts) := \TV(A_1^2, \ldots, A_n^2)$.  
Obviously, $(D_1, \ldots, D_n)$ forms a toric system which we call the \emph{standard} toric system, and the associated toric variety is $X$ itself.

\begin{ex}[{\cite[Proposition 5.2]{hille:perling:08}}]
\label{ex:hirze:ts}
All toric systems on a Hirzebruch surface $\FF_{\!r} = \TV(r,0,\mns r,0)$ up to cyclic permutation or reflection of the indices are of the form
\begin{align*}
        \Ts_{r,i}&=\big(P, iP+Q, P, \mns (r+i)P+Q\big);\text{ and}\\
        \tilde\Ts_{r,i}&=\big(Q-\frac{r}{2}P,P+i(Q-\frac{r}{2}P),Q-\frac{r}{2}P,P-i(Q- \frac{r}{2}P)\big)\text{ if $r$ is even}.
\end{align*}
Here, we choose $P$ as the class of the fibre of ruling with $P^2 =0$  and $Q^2 = r$ and $P.Q = 1$.

The associated toric systems are
$\TV(\Ts_{r,i})=\FF_{|r+2i|}$ and $\TV(\tilde\Ts_{r,i})=\FF_{|2i|}$.
\end{ex}

There is a more conceptual approach using Gale-duality. We can write down the short exact sequence:
\[
\xymatrix@R=0pt{
0 \ar[r] & \Pic(X) \ar[r] & \ZZ^n \ar[r]^{\coker\qquad} & \ZZ^n/\Pic(X) \cong \ZZ^2 \ar[r] & 0\\
& D \ar@{|->}[r] & (D.A_1, \ldots, D.A_n)
}
\] 
The duality here is between the sequence $(A_1,\ldots,A_n)$ in $\Pic(X)$ and the images $(v_1,\ldots,v_n)$ of the standard basis of $\ZZ^n$ under the projection to the cokernel $\ZZ^2$.
It turns out that these images $(v_1,\ldots,v_n)$ are actually the ray generators of the fan of $\TV(\Ts)$.

There is one important point which becomes essential in the following sections. This duality is only \emph{up to base change}. This means that given two toric systems $\Ts$ and $\Ts'$, the corresponding toric varieties $\TV(\Ts)$ and $\TV(\Ts')$ are equal if and only if these toric systems differ by an automorphism of $\Pic(X)$. Due to the defining properties of toric systems, such an automorphism has to preserve the intersection pairing and the canonical divisor $K_X$. For that reason we call such an automorphism a \emph{$K$-isometry}.
Before we have a closer look at these $K$-isometries, we recall some facts about

\subsection*{The Birational Geometry of Toric Surfaces}

We don't want to go deep into the Minimal Model Theory for toric varieties as presented in \cite{cox:little:schenk} or \cite{matsuki}. We only need well-known facts about the birational geometry of rational surfaces, see for example \cite[Chapter III]{manin}.

The first thing to note is that the blow-up of a toric surface $X$ given as $\TV(a_1, \ldots, a_n)$ in a $T$-invariant point can be described purely combinatorially. Such a point is the intersection of two adjacent $T$-invariant divisors $D_i$ and $D_{i+1}$. The blow-up $\tilde{X}$ is again toric and has the form
\[
\TV(a_1, \ldots,a_{i-1}, a_{i}-1,\mns 1,a_{i+1}-1,a_{i+2} \ldots, a_n).
\]
By Minimal Model Theory, we can blow-down any (smooth, projective) toric surface $X$ in $T$-invariant points finitely many times, until we arrive at a Mori fibration. There are only two such cases, namely $\PP^2 = \TV(1,1,1) \to \{pt\}$ or the Hirzebruch surface $\FF_{\!r}=\TV(r,0,\mns r,0) \to \PP^1$.
So we can reduce many statements about a general toric surface by induction to a statement about $\PP^2$ and $\FF_{\!r}$. 

\begin{rmk}
\label{rmk:exdiv:orth}
Given such a toric blow-up $b\colon X' \to X$ with exceptional divisor $R$,
we obtain an inclusion $b^\ast \colon \Pic(X) \hookrightarrow \Pic(X')$. By this inclusion $\Pic(X) = \OO(R)^\perp$ with respect to the intersection pairing. Here and subsequently, we will abuse notation and omit $b^\ast$ for pulled-back line bundles and divisors.
\end{rmk}

\begin{defn}[{+ Proposition, cf. \cite[Section 2]{hille:perling:08}}]
\label{defn:augment}
Consider a toric system $\Ts = (A_1, \ldots, A_n)$ on $Y$ and a blow-up $b\colon X' \to X$.
Then 
\[
\Ts' = \Aug_i (A_1, \ldots, A_{i-1}, A_{i} - R, R, A_{i+1}-R, A_{i+1}, \ldots, A_n)
\]
is called the \emph{augmentation} of $\Ts$ at position $i$, which is a toric system on $X'$.
Moreover, note that $\TV(\Ts')$ is a toric blow-up of $\TV(\Ts)$ in the toric fixed point which is the intersection of the $i$-th and $(i+1)$-st $T$-invariant divisor on $\TV(\Ts)$.
\end{defn}

Now we return to a closer inspection of

\subsection*{$K$-Isometries}
On a Hirzebruch surface $\FF_{\!r} = \TV(r,0,\mns r,0)$ with $r=2a+1$, there is a special basis $(H,R_1)$ of $\Pic(\FF_{\!r})$ which diagonalises the intersection pairing with signature $(1,\mns 1)$. In terms of the $T$-invariant Divisors we can choose
\[
H = \OO(D_1-aD_2) \text{ and } R_1 = \OO(D_1-(a+1)D_2).
\]
Any toric surface $X$ with $\rho(X)>2$ can be blown down to such a Hirzebruch surface, say by $X=X_l \to \cdots \to X_1 = \FF_{2a+1}$. Inductively, we obtain a similar basis $(H,R_1,\ldots,R_l)$ which diagonalises the pairing with signature $(1,\mns 1,\ldots,\mns 1)$, namely, by pulling back the according basis of $\Pic(X_{l-1})$ and extending by the line bundle $R_l$ of the exceptional divisor.
We call such a basis a \emph{good basis} of $\Pic(X)$.

\begin{prop}[{cf. \cite[Theorem 23.9(ii)]{manin}}]
\label{prop:kiso:weyl}
Let $X$ be a toric surface of Picard rank $3\leq \rho(X) \leq9$ with good basis $(H,R_1,\ldots,R_l)$.
Then the following groups coincide
\begin{itemize}
\item the group of $K$-isometries of $\Pic(X)$, and,
\item the Weyl group $W_X$ of the roots 
\[
\Phi_X = \{ \OO(D) \in Pic(X)\ |\ D^2=\mns 2 \text{ and } D.K_X=0 \},
\]
i.e. $W_X$ is generated by the reflections $s_L \colon L' \mapsto L' - 2 \cdot \frac{D.D'}{D^2} \cdot L = L' + (D.D') \cdot L$ for $L = \OO(D) \in \Phi_X$ where $L' = \OO(D')$.
\end{itemize}
\end{prop}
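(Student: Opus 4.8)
The inclusion $W_X \subseteq \{\text{$K$-isometries}\}$ is the easy half, and I would dispatch it first. Each reflection $s_L$ is orthogonal by construction, and for a root $L = \OO(D)$ it fixes the canonical class, since $s_L(K_X) = K_X + (D . K_X)\, L = K_X$ because $D . K_X = 0$ by definition of $\Phi_X$. Hence every generator of $W_X$, and therefore all of $W_X$, preserves the intersection pairing and $K_X$.

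For the reverse inclusion I would exploit that $K_X^2 = 10 - \rho(X)$ lies in $\{1,\dots,7\}$, so $K_X$ is a positive vector and $\Pic(X)_\QQ = \QQ K_X \perp (K_X^\perp)_\QQ$, with $K_X^\perp$ negative definite of rank $l = \rho(X)-1$ and $\Phi_X$ precisely its set of $(\mns2)$-vectors. A short computation in a good basis identifies $\Phi_X$, for $\rho(X) = 3,\dots,9$, with the root systems $A_1,\ A_2\times A_1,\ A_4,\ D_5,\ E_6,\ E_7,\ E_8$. Since a $K$-isometry $\phi$ fixes $K_X$, it restricts to an isometry of $K_X^\perp$ and is completely determined by this restriction; this yields an injection of the group $G$ of $K$-isometries into $O(K_X^\perp)$ sending $W_X$ isomorphically onto the Weyl group $W(\Phi_X)$ acting on $K_X^\perp$. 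It remains to identify the image of $G$.

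Here the crucial input is that $\Pic(X)$ is unimodular. Consequently $\Pic(X)$ is the overlattice of $\ZZ K_X \perp K_X^\perp$ prescribed by an anti-isometry between the discriminant forms of $\ZZ K_X$ and of $K_X^\perp$, both of order $|K_X^2| = 10-\rho(X)$. An isometry of $K_X^\perp$ extends to a $K$-isometry of $\Pic(X)$ exactly when it respects this gluing; as fixing $K_X$ forces the trivial action on the discriminant of $\ZZ K_X$, the extension exists if and only if $\phi|_{K_X^\perp}$ acts trivially on $\mathrm{disc}(K_X^\perp)$. Thus $G$ is identified with the kernel of the natural map $O(K_X^\perp) \to O(\mathrm{disc}\, K_X^\perp)$. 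I would conclude by the classical fact that for these root lattices this kernel is exactly $W(\Phi_X)$: root reflections act trivially on the discriminant group, while the remaining isometries act nontrivially — for example $-\mathrm{id}$ on $K_X^\perp$ paired with $\mathrm{id}$ on $K_X$ fails to preserve $\Pic(X)$, as one checks at once by evaluating it on an exceptional class. This gives $G = W(\Phi_X) = W_X$.

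The main obstacle is precisely this last identification, i.e.\ ruling out the ``extra'' isometries of $K_X^\perp$ (the diagram automorphisms and $-\mathrm{id}$). A purely Weyl-theoretic reduction — compose $\phi$ with a suitable $w \in W_X$ so that $w\phi$ stabilizes a fixed Weyl chamber and hence induces a diagram automorphism — carries one most of the way, but it does not see the full complement when $\Phi_X$ fails to span $K_X^\perp$; this happens exactly at $\rho(X)=3$, where $\Phi_X = A_1$ has rank $1 < 2$. The discriminant computation above treats all cases uniformly, including $\rho(X)=3$, and is where I would invest the actual work; alternatively one may simply invoke \cite[Theorem 23.9]{manin}, whose hypotheses are satisfied here since $X$ is rational with $1 \le K_X^2 \le 7$.
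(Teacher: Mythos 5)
Your proposal is correct in its overall strategy but takes a genuinely different route from the paper. The paper's own proof is essentially a reduction to the literature: the inclusion $W_X \subseteq \{K\text{-isometries}\}$ is noted to be clear, the case $\rho(X)=3$ (explicitly omitted by Manin) is settled by a direct computation, and for $4\leq\rho(X)\leq 9$ the result is imported from \cite[Section 26.5]{manin}, with the remark that Manin's del Pezzo arguments carry over verbatim because the pair $(\Pic(X),K_X)$ is lattice-theoretically the same (in a good basis $K_X = -3H + R_1 + \cdots + R_l$, exactly as for del Pezzo surfaces). Your argument instead proves the hard inclusion by lattice gluing: $\Pic(X)$ is odd unimodular, $\ZZ K_X$ and $K_X^\perp$ are primitive orthogonal sublattices glued along an anti-isometry of discriminant groups of order $10-\rho(X)$, a $K$-isometry is determined by its restriction to $K_X^\perp$, and such a restriction extends back to $\Pic(X)$ if and only if it acts trivially on $\mathrm{disc}(K_X^\perp)$; hence the group of $K$-isometries equals $\ker\bigl(O(K_X^\perp)\to O(\mathrm{disc}\,K_X^\perp)\bigr)$. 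This is a sound and more self-contained argument, and it makes explicit a structural statement that the paper's citation hides.

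There is, however, one localized gap, and it sits exactly at $\rho(X)=3$ --- the same case the paper must treat by hand. Your final step invokes the classical fact that for a root lattice $L$ the kernel of $O(L)\to O(\mathrm{disc}\,L)$ is $W(L)$; this applies for $4\leq\rho(X)\leq 9$, where $K_X^\perp$ is $A_1\times A_2$, $A_4$, $D_5$, $E_6$, $E_7$, $E_8$. But for $\rho(X)=3$ the lattice $K_X^\perp$ is a rank-two negative definite lattice of determinant $7$ (Gram matrix $\begin{pmatrix} -2 & -1 \\ -1 & -4 \end{pmatrix}$ in a suitable basis) which is \emph{not} generated by its roots $\Phi_X\cong A_1$, so it is not a root lattice and the classical fact does not literally apply --- you flag this issue for the Weyl-chamber argument but your discriminant argument inherits it. What is missing is the determination of $O(K_X^\perp)$ in this case: it has order four, namely $\{\pm\mathrm{id},\pm s_v\}$ where $v$ is the root, and the two elements outside $W_X$ both act as $-1$ on $\mathrm{disc}\cong\ZZ/7\ZZ$, hence lie outside the kernel. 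This is an easy enumeration of vectors of norm $-2$ and $-4$, but it is a computation, not a citation; your remark that $-\mathrm{id}$ fails to extend handles only one coset representative unless the full group is known (and, incidentally, that remark is false for $\rho(X)=8,9$, where $-\mathrm{id}$ does extend but harmlessly lies in $W_X$). Finally, your fallback of ``simply invoking \cite[Theorem 23.9]{manin}'' does not repair this: as the paper points out, Manin omits precisely the case $\rho(X)=3$.
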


\begin{proof}
Actually, the case $\rho(X)=3$ was omitted in \cite{manin}. It is clear (even without the restriction $\rho(X)\leq9$) that $W_X$ is a subset of the $K$-isometries.
The other inclusion can be shown by a direct computation for $X$ with $\rho(X)=3$.
For the rest of the proof we refer the reader to \cite[Section 26.5]{manin}.
We note that the statements in [loc.~sit.] are formulated for del Pezzo surfaces, but carry over to the case of toric surfaces verbatimly.
\end{proof}

\begin{rmk}
There are three integers associated to a toric surface $X$: the rank of $\Pic(X)$ denoted by $\rho(X)$, the number of blow-ups $l$ from a weighted projective plane, and the number of rays $n$ which equals the rank of $K_0(X)$.
Actually, they are closely related: $\rho(X)+2 = l+1 = n$.
\end{rmk}

\begin{rmk}[{\cite[Theorem 23.9(i), Section 25.5, Section 26.6]{manin}}]
In the case of $\rho(X)\leq9$, $W_X$ is finite and can be described explicitly. We recall this description for the convenience of the reader for small $\rho(X)$.
Let $(H,R_1,\ldots,R_l)$ with $l=\rho(X)-1$ be a good basis of $\Pic(X)$.
For $1\leq i<j\leq l$, we set $R_{i<j} = R_i\otimes R_j^{-1}$. Moreover, for $1\leq i<j<k\leq l$ we set
$R_{i<j<k} = H\otimes R_i\otimes R_j\otimes R_k$. 
This leads to the following table, where we list the root systems and Weyl groups for Picard ranks $3$ to $5$.
\[
\begin{array}{c|c|c|c|c}
\rho(X) & \Phi_X & \#(\Phi_X) & \# (W_X)  & \text{Type of } W_X\\
\hline
3 & \{ R_{i<j}^{\pm 1} \} & 2 & 2 & A_1\\
4 & \{ R_{i<j}^{\pm 1}, R_{i<j<k}^{\pm 1} \} & 8 & 12 & A_1 \times A_2\\
5 & \{ R_{i<j}^{\pm 1}, R_{i<j<k}^{\pm 1} \} & 20 & 120 & A_4\\
\end{array}
\]
\end{rmk}

\subsection*{Exceptional Sequences of Line Bundles}
For the general theory of derived categories in algebraic geometry, we refer the reader to \cite{huybrechts}. We denote by $\Db(X)$ the bounded derived category of coherent sheaves on a (smooth, projective, toric) surface $X$.

\begin{defn}
An object $E$ of $\Db(X)$ is called \emph{exceptional} if it fulfills
\[
\Ext^i (E,E)=
\begin{cases}
\CC & \textnormal{if $i=0$,}\\
0 & \textnormal{if $i\not=0$.}\\
\end{cases}
\]
An \emph{exceptional sequence} $\Ex$ is a finite sequence of exceptional objects $(E_1,\ldots,E_n)$
such that there are no morphisms back, that is, $\Ext^k (E_j,E_i)=0$ for $j>i$ and all $k$. 
An exceptional sequence is called \emph{full} if $E_1,\ldots,E_n$ generate $\Db(X)$, that is, the smallest full triangulated subcategory of $\Db(X)$ containing all $E_i$'s is already $\Db(X)$.
\end{defn}

\begin{rmk}
It is fairly obvious that the image of a full exceptional sequence under the projection $\Db(X) \to K_0(X)$ forms a basis of the Grothendieck group $K_0(X)$. On the other hand, it is not clear whether an exceptional sequence with $\rk K_0(X)$ elements is automatically full. Therefore, 
we call an exceptional sequence $\Ex$ \emph{$K$-full} if it  has $\rk K_0(X)$ elements.
\end{rmk}

Let $\Ex = (E_1, \ldots, E_n)$ be a $K$-full exceptional sequence of line bundles on $X$. Since tensoring with a line bundle is an autoequivalence of $\Db(X)$, we can assume without loss of generality that $E_1 = \OO_X$.
As observed in \cite{hille:perling:08}, the differences of $\Ex$ form a toric system, more precisely,
we can define a toric system $\Ts=(A_1, \ldots, A_n)$ by
\[
\OO(A_i) = E_{i+1} \otimes E_i^{-1} \ \text{ for } 1 \leq i < n \quad \text{ and } \quad A_n = - K_X-\sum_{i=1}^{n-1} A_i.
\]
We call any toric system arising this way \emph{exceptional}.
An exceptional toric system $\Ts$ is by definition $K$-full. If the corresponding exceptional sequence is full, we call $\Ts$ full as well.

\begin{rmk}
Given a toric system $\Ts=(A_1,\ldots,A_n)$ we can rearrange it cyclically in two ways.
We can \emph{rotate} $\Ts$ by shifting the index by $i$ and get another toric system $(A_{i+1},\ldots,A_n,A_1,\ldots,A_i)$. The other way is to \emph{mirror} the toric system with result $(A_n, \ldots,A_1)$. Obviously, the associated toric surface doesn't change by these operations.
Moreover, it's an easy application of Serre duality that the exceptionality of a toric system is preserved, too.
\end{rmk}

\begin{ex}[{continuation of Example \ref{ex:hirze:ts}}]
\label{ex:hirze:exseq}
It was shown in \cite{hille:perling:08} that the toric systems $\Ts_{r,i}$ are exceptional and full for all $i\in \ZZ$. The other family $\tilde\Ts_{r,i}$ is only exceptional if $r=0$ for all $i\in\ZZ$ since $P$ and $Q$ are interchangeable, or if $r=2$ and $i=0$ in which case $\tilde\Ts_{2,0} = \Ts_{2,\mns 1}$.
In these exceptional cases, $\tilde\Ts_{r,i}$ is also full.
\end{ex}

\begin{prop}[{cf. \cite[Proposition 5.5]{hille:perling:08} and \cite[Lemma 3.4]{me:ilten:11b}}]
\label{prop:aug:exfull}
Let $\Ts$ be a toric system. Then $\Ts$ is (full) exceptional if and only if $\Aug_i \Ts$ is (full) exceptional.
\end{prop}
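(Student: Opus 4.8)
The plan is to pass to the associated exceptional sequences and to read off the statement from the semiorthogonal structure of $\Db(X')$ induced by the blow-up $b\colon X'\to X$ with exceptional divisor $R$. I would first make the translation explicit. Normalising the exceptional sequence of $\Ts$ to $\Ex=(E_1,\dots,E_n)$ with $E_1=\OO_X$ and $E_{k+1}=E_k\otimes\OO(A_k)$, and using $\Pic(X)=\OO(R)^\perp\subseteq\Pic(X')$ from Remark~\ref{rmk:exdiv:orth}, a partial-sum computation shows that the sequence $\Ex'=(F_1,\dots,F_{n+1})$ attached to $\Aug_i\Ts$ is
\[
\Ex'=\big(b^\ast E_1,\dots,b^\ast E_i,\ b^\ast E_{i+1}\otimes\OO(-R),\ b^\ast E_{i+1},\ b^\ast E_{i+2}\otimes\OO(-R),\dots,b^\ast E_n\otimes\OO(-R)\big).
\]
Thus $\Ex'$ is built entirely from the pullbacks $b^\ast E_k$ and their twists by $\OO(-R)$, the only new ingredient being the consecutive pair $b^\ast E_{i+1}\otimes\OO(-R)$, $b^\ast E_{i+1}$, which fits into the short exact sequence $0\to b^\ast E_{i+1}\otimes\OO(-R)\to b^\ast E_{i+1}\to\OO_R\to 0$ since $b^\ast E_{i+1}|_R=\OO_R$.

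For the exceptionality equivalence I would use that $b^\ast$ is fully faithful (because $Rb_\ast\OO_{X'}=\OO_X$), so that $\Ext^k_{X'}(b^\ast E,b^\ast E')=\Ext^k_X(E,E')$, together with the standard identities $b_\ast\OO_{X'}(mR)=\OO_X$ and $R^qb_\ast\OO_{X'}(mR)=0$ for $m\ge 0$, $q>0$, and Serre duality on the rational surface $X'$. Every $F_c$ is a line bundle, hence exceptional. The semiorthogonality $\Ext^k(F_b,F_a)=0$ for $b>a$ then reduces, after cancelling the line-bundle twists, to $\Ext^k_X(E_b,E_a)=0$, i.e.\ to the exceptionality of $\Ex$; the finitely many cases involving the distinguished pair are handled by the short exact sequence above and the cohomology of $\OO_{X'}(\pm R)$. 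Since every reduction is an equality, this proves both implications simultaneously, recovering \cite[Proposition~5.5]{hille:perling:08}.

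For fullness I would appeal to Orlov's blow-up decomposition $\Db(X')=\langle\OO_R(-1),b^\ast\Db(X)\rangle$. Assuming $\Ex$ full, the $b^\ast E_k$ generate $b^\ast\Db(X)$; I then show $\langle\Ex'\rangle=\Db(X')$. The triangle above puts $\OO_R\in\langle\Ex'\rangle$, and feeding $\OO_R$ into the analogous triangles $b^\ast E_k\otimes\OO(-R)\to b^\ast E_k\to\OO_R$ promotes every twisted tail object to the untwisted $b^\ast E_k$, so $\langle\Ex'\rangle\supseteq\langle b^\ast\Db(X),\OO_R\rangle$. That the latter is already all of $\Db(X')$ follows from saturatedness: any object right-orthogonal to $b^\ast\Db(X)$ lies in $\ker Rb_\ast=\langle\OO_R(-1)\rangle$, and the nonvanishing $\Ext^2(\OO_R,\OO_R(-1))\cong\CC$ then forces it to vanish once it is also right-orthogonal to $\OO_R$. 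Being exceptional with $\rk K_0(X')=n+1$ members and generating $\Db(X')$, the sequence $\Ex'$ is full.

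For the converse I would push the generation statement back down. When $\Ex'$ is full, the triangles already show $\langle b^\ast E_1,\dots,b^\ast E_n,\OO_R\rangle=\Db(X')$, and applying the exact functor $Rb_\ast$ --- which sends a generating family to a generating family, since $Rb_\ast b^\ast=\mathrm{id}$ --- yields $\langle E_1,\dots,E_n,\OO_Z\rangle=\Db(X)$ for the skyscraper $\OO_Z$ at the centre $Z=b(R)$. It remains to discard $\OO_Z$: as $(E_1,\dots,E_n)$ is an exceptional collection of length $n=\rk K_0(X)$, its right-orthogonal complement has vanishing $K_0$-rank, and since a rational surface admits no phantom subcategories this complement is zero, so $\Ex$ is full. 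This last step --- removing the extra generator with the correct twist, i.e.\ matching $\OO_Z$, $\OO_R$ and the Orlov generator $\OO_R(-1)$ --- is the only non-formal point, and is exactly where \cite[Lemma~3.4]{me:ilten:11b} is used; it is also the step I expect to be the main obstacle, since all twists occurring in $\Ex'$ are by $\OO(-R)$ and the bookkeeping of the complementary $\Db(\mathrm{pt})$-factor must be done with care.
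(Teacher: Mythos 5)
First, a remark on the comparison target: the paper contains no proof of this proposition at all --- it is imported wholesale from \cite[Proposition 5.5]{hille:perling:08} and \cite[Lemma 3.4]{me:ilten:11b} --- so your argument can only be judged on its own merits. Your dictionary between $\Aug_i\Ts$ and the sequence $\Ex'$ of line bundles on $X'$ is correct, your proof of the exceptionality equivalence (via $Rb_\ast\OO_{X'}=\OO_X$, $Rb_\ast\OO_{X'}(R)=\OO_X$ and $H^\bullet(X',\OO(-R))=0$) is sound, and the upward fullness implication via Orlov's blow-up decomposition and admissibility of subcategories generated by exceptional collections is fine. The gap is in the last step of the converse.

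Having reduced to $\langle E_1,\dots,E_n,\OO_Z\rangle=\Db(X)$, you discard $\OO_Z$ by invoking the claim that ``a rational surface admits no phantom subcategories.'' This is not a usable ingredient: it is not a theorem, and as a general statement about rational surfaces it is in fact \emph{false} --- Krah has constructed, on the blow-up of $\PP^2$ in ten general points, an exceptional collection of line bundles of maximal length $\rk K_0$ whose right orthogonal is a nonzero phantom. Worse, the appeal is circular relative to this very paper: ``no phantoms'' is essentially the assertion that every $K$-full exceptional collection is full, which is exactly the question the paper is written to address (see the remark introducing $K$-fullness and the closing Conjecture); if that assertion were quotable, the paper's main computation would be unnecessary. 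Fortunately your converse direction can be repaired without any phantom-type input, by running the orthogonality argument downstairs instead of the generation argument: let $T\in\Db(X)$ satisfy $\Hom(E_k,T[j])=0$ for all $k,j$. Then, by adjunction and the projection formula,
\[
\Hom(b^\ast E_k, b^\ast T[j]) \cong \Hom(E_k,T[j]) = 0
\quad\text{and}\quad
\Hom\bigl(b^\ast E_k\otimes\OO(-R), b^\ast T[j]\bigr) \cong \Hom\bigl(E_k, T\otimes Rb_\ast\OO(R)[j]\bigr) = 0,
\]
using $Rb_\ast\OO_{X'}(R)=\OO_X$; since every member of $\Ex'$ is of one of these two forms, $b^\ast T$ lies in $\langle\Ex'\rangle^{\perp}$, which vanishes because $\Ex'$ is full. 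Hence $T=Rb_\ast b^\ast T=0$, so $\langle\Ex\rangle^{\perp}=0$, and admissibility of $\langle\Ex\rangle$ (it is generated by an exceptional collection) gives $\langle\Ex\rangle=\Db(X)$. With this replacement your proof is complete; as written, the downward implication is not proven.
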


We arrive now at the central notion of this article.

\begin{defn}
Let $\Ts$ be an exceptional toric system on $X$ of Picard rank $\rho(X)>2$. We call $\Ts$ \emph{constructible} if there is a path of toric blow-downs $X \to \cdots \to \FF_{\!r}$ to a Hirzebruch surface such that $\Ts$ can be written as successive augmentations along this path of an exceptional toric system on $\FF_{\!r}$.
\end{defn}

If $\Ts$ is constructible, then it is automatically full by Proposition~\ref{prop:aug:exfull}.
It was shown in \cite[Theorem 5.4]{me:ilten:11b}, that on a toric surface of Picard rank $3$ and $4$
any exceptional toric system is constructible. For Picard rank $5$, an example of a non-constructible toric system was given. We will investigate this counterexample more closely, but we need a further technique.

\section{Spherical Twists and Line Bundles}
\label{sec:sphtwist}

To determine the structure of a derived category $\Db(X)$, it is interesting to look at its group of autoequivalences. Some autoequivalences come for free: these are shifting the complexes, tensoring with a line bundle and pulling back by an automorphism of the underlying variety $X$.
In general, it is a difficult question whether there are additional autoequivalences. One way to construct some is by spherical objects. We refer the reader to \cite{seidel:thomas} and \cite{huybrechts} for a thorough treatment.

\begin{defn}
An object $E \in \Db(X)$ is called \emph{spherical} if $E \otimes K_X \cong E$ and 
\[
\Hom(E,E[i]) = \begin{cases} \CC & \text{for } i = 0 \text{ or } \dim X, \\ 0 & \text{otherwise.} \end{cases}
\]
For the precise definition of a \emph{spherical twist} we refer the reader to \cite[Section~8]{huybrechts}. We will only need its main property, namely,
the spherical twist $T_E$ associated to a spherical object $E$ fits into the distinguished triangle
\begin{equation}
\label{eq:twist:triangle}
\xymatrix{
\bigoplus_i \Hom(E,F[i]) \otimes E[\mns i] \ar[rr]^{ev} & & F \ar[ld] \\
& T_E(F) \ar[lu]^{[1]}
}
\end{equation}
for any object $F \in \Db(X)$.
\end{defn}

\begin{prop}[{\cite{seidel:thomas}}]
Let $T_E$ be a spherical twist.
Then $T_E$ is an autoequivalence of $\Db(X)$. Moreover, $T_E(E) \cong E[1-\dim X]$ and 
$T_E(F) \cong F$ for any $F \in \Db(X)$ with $\Hom(E,F[i]) = 0$ for all $i$.
\end{prop}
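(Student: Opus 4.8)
The third claim is immediate from the defining triangle \eqref{eq:twist:triangle}: if $\Hom(E,F[i]) = 0$ for all $i$, then the left-hand object of the triangle vanishes, so it reads $0 \to F \to T_E(F) \to 0$, whence $T_E(F) \cong F$. For the second claim I would set $F = E$ and evaluate the left-hand object using sphericality: since $\Hom(E,E[i]) = \CC$ exactly for $i = 0$ and $i = \dim X$ and vanishes otherwise, that object is $E \oplus E[\mns\dim X]$, and the evaluation map has the form $(\mathrm{id}_E, \alpha)$, where $\alpha \colon E[\mns\dim X] \to E$ is the image of a generator of $\Hom(E,E[\dim X])$. Precomposing with the unipotent automorphism $\left(\begin{smallmatrix} \mathrm{id} & \mns\alpha \\ 0 & \mathrm{id}\end{smallmatrix}\right)$ of $E \oplus E[\mns\dim X]$ replaces $ev$ by the split projection $(\mathrm{id},0)$, whose cone is $E[1-\dim X]$. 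The triangle then gives $T_E(E) \cong E[1-\dim X]$.

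The first claim, that $T_E$ is an autoequivalence, is the substantial part, and here I would follow Seidel--Thomas by realising $T_E$ as a Fourier--Mukai transform. The triangle \eqref{eq:twist:triangle} is the pointwise image of a kernel $P_E = \mathrm{Cone}\bigl(E^\vee \boxtimes E \xrightarrow{ev} \OO_\Delta\bigr) \in \Db(X \times X)$, where $ev$ is the trace map along the diagonal $\Delta$. The plan is to produce an inverse by taking the dualised kernel $P_E'$ of the adjoint twist (built from the coevaluation $\OO_\Delta \to E \boxtimes E^\vee[\dim X]$) and verifying $P_E \ast P_E' \cong \OO_\Delta \cong P_E' \ast P_E$ by a direct computation of the convolution of kernels.

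The main obstacle is precisely this convolution. The key input is that the graded algebra $\Hom^\bullet(E,E)$ equals $H^\bullet(S^{\dim X},\CC)$, the cohomology of a sphere; it is this sphericality that forces the cross-terms in $P_E \ast P_E'$ to cancel so that the mixed kernel collapses onto $\OO_\Delta$. Controlling that cancellation --- tracking the pairing $\Hom(E,E) \otimes \Hom(E,E[\dim X]) \to \Hom(E,E[\dim X])$ together with the compatibility of evaluation and coevaluation --- is where the genuine work lies; everything else reduces to formal manipulations of distinguished triangles and adjunctions.

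Alternatively, and perhaps more economically, I would avoid an explicit inverse and instead verify that the adjunction unit $\mathrm{id} \to T_E^R \circ T_E$ is an isomorphism, so that $T_E$ is fully faithful, and then upgrade to an equivalence by invoking that $T_E$ commutes with the Serre functor --- a consequence of $E \otimes K_X \cong E$ --- since a fully faithful Fourier--Mukai transform of $\Db(X)$ commuting with Serre duality on the connected variety $X$ is automatically essentially surjective. This route still feeds on the same sphericality computation when checking fully faithfulness, so the essential difficulty is unchanged.
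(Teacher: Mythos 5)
The paper offers no proof of this proposition at all --- it is imported as a black box from \cite{seidel:thomas} (see also \cite[Chapter 8]{huybrechts}) --- so there is no internal argument to compare yours against; the proposal must be judged on its own merits. Your arguments for the second and third claims are complete and correct: if $\Hom(E,F[i])=0$ for all $i$, the triangle \eqref{eq:twist:triangle} degenerates and $F \to T_E(F)$ is an isomorphism; and for $F=E$, sphericality identifies the left-hand object with $E \oplus E[-\dim X]$, after which your unipotent change of basis turns the evaluation map into the split projection $(\mathrm{id},0)$, whose cone is indeed $E[1-\dim X]$. These are exactly the standard arguments from the cited sources.

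The first claim, however, is where the actual content of the theorem sits, and what you give there is a correct plan rather than a proof. Both routes you describe --- an explicit inverse kernel with $P_E \ast P_E' \cong \OO_\Delta$, or full faithfulness combined with commutation with the Serre functor (which is where the hypothesis $E \otimes K_X \cong E$ enters) --- are the genuine Seidel--Thomas/Huybrechts strategies, and you correctly locate the difficulty; but the cancellation in the convolution, respectively the verification of full faithfulness, is precisely the substance of the cited result and is not carried out. If you want to close the gap economically, follow your second route and test full faithfulness on the spanning class $\Omega = \{E\} \cup E^{\perp}$: on $\Omega$ the two statements you have already proven describe $T_E$ completely ($T_E(E) \cong E[1-\dim X]$ and $T_E$ restricted to $E^{\perp}$ is the identity), so the required $\Hom$-comparisons reduce to Serre duality together with $E \otimes K_X \cong E$; what then remains is to show that $\Omega$ really is a spanning class and that the comparison maps are the natural ones. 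Within this paper the omission is harmless, since the proposition is only quoted, but as a standalone proof the autoequivalence statement remains unestablished.
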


In the case of toric surfaces, spherical twists are the only autoequivalences beside the usual ones.

\begin{prop}[{\cite[Theorem 1 and 5]{broom:ploog}}]
Let $X$ be a smooth projective toric surface. Then $\Db(X)$ is generated by the automorphisms of the surface, shifting the complexes, twisting by line bundles and the spherical twists $\{ T_{\OO_C}(i)\ |\ C \text{ $(\mns 2)$-curve on } X, i \in \ZZ \}$.
\end{prop}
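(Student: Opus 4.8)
The plan is to follow the strategy of Broomhead and Ploog and reduce the statement to a computation of how $\Aut(\Db(X))$ acts on cohomology. Since $X$ is smooth and projective, Orlov's representability theorem shows that every autoequivalence $\Phi$ is a Fourier--Mukai transform and therefore induces an isometry $\Phi_*$ of the total even cohomology $H^0(X) \oplus H^2(X) \oplus H^4(X)$ with respect to the Mukai pairing; equivalently $\Phi$ acts by an isometry of $K_0(X)$ for the Euler form, a lattice of rank $n = \rho(X)+2$. The first step is thus to analyse the homomorphism $\Aut(\Db(X)) \to O(K_0(X))$ by describing its image and its kernel separately, and then to check that both are already accounted for by the four kinds of functors in the statement.

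For the image I would first dispose of the del Pezzo case. If $\mns K_X$ is ample, then adjunction forces $K_X.C = 0$ for any $(\mns 2)$-curve $C$, contradicting ampleness, so $X$ carries no $(\mns 2)$-curve at all; the Bondal--Orlov theorem then says that every autoequivalence is generated by shifts, automorphisms of $X$ and line-bundle twists, which is exactly the asserted list with an empty family of spherical twists. In the remaining (non-del-Pezzo) case $X$ does carry $(\mns 2)$-curves, and here the key observation is that $\OO_C$ is a spherical object and that its twist $T_{\OO_C}$ lifts the reflection $s_{\OO(C)}$ of Proposition~\ref{prop:kiso:weyl}. As $C$ ranges over all $(\mns 2)$-curves, these reflections generate the full $K$-isometry group $W_X$ tabulated in the excerpt, while the images of $\Aut(X)$ and of the twists $\otimes L$ realise the remaining isometries that mix the cohomological degrees. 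The content of this step is that the combined image is precisely the subgroup of $O(K_0(X))$ realisable by a Fourier--Mukai equivalence, with nothing exotic left over.

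It then remains to identify the kernel of $\Aut(\Db(X)) \to O(K_0(X))$, i.e.\ the autoequivalences acting trivially on cohomology. Because $X$ is a rational surface there are no isotropic ``point-like'' directions producing extra symmetries as on K3 or abelian surfaces, and one expects the kernel to be generated by the shift together with the automorphisms of $X$ acting trivially on cohomology (in particular the connected group $\Aut^0(X)$, which for a toric surface contains the acting torus). I would confirm this by examining a Fourier--Mukai kernel whose induced action on $K_0(X)$ is the identity and showing that its Mukai vector forces it to be a shift of the structure sheaf of the graph of such an automorphism. Every generator of the kernel then already appears among shifts and automorphisms in the statement, and combining the three steps expresses an arbitrary autoequivalence, modulo shifts, as a composite of automorphisms, line-bundle twists and $(\mns 2)$-curve spherical twists.

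The main obstacle is the middle step: ruling out \emph{exotic} isometries and, dually, verifying that every reflection in $W_X$ is realised by an honest spherical twist $T_{\OO_C}$ rather than merely by a reflection in an abstract $(\mns 2)$-class. This requires controlling which $(\mns 2)$-classes are effective, so that $\OO_C$ is genuinely a spherical object available for twisting. I expect the clean way to handle this is to exploit the toric blow-up/blow-down structure $X \to \cdots \to \FF_{\!r}$: an inductive argument reduces the realisability question to the minimal surfaces $\PP^2$ and $\FF_{\!r}$, on which the $(\mns 2)$-curves and the resulting reflections can be listed by hand and matched against the generators of $W_X$.
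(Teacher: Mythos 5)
First, a point of reference: the paper itself gives \emph{no} proof of this proposition --- it is quoted verbatim from Broomhead--Ploog (and the wording ``$\Db(X)$ is generated by'' is a slip for ``$\Aut(\Db(X))$ is generated by''). So your proposal must be measured against the argument in that reference, and it has a genuine gap at its central step: the kernel of $\Aut(\Db(X)) \to O(K_0(X))$. It is false that an autoequivalence acting trivially on $K_0(X)$ (equivalently, on cohomology) must be a shift composed with an automorphism. If $C$ is a $(\mns 2)$-curve and $T = T_{\OO_C(\mns 1)}$, then $T^2$ acts trivially on $K_0(X)$, since the induced map is a reflection and hence an involution; yet $T^2$ is not standard. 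Indeed, for $x \in C$ the object $T(\OO_x)$ is a two-term complex supported on $C$ rather than a shifted skyscraper, spherical twists on surfaces have infinite order by Seidel--Thomas, and a standard autoequivalence fixing all $\OO_y$ with $y \notin C$ and all line bundles in $\OO(C)^\perp$ would have to be the identity. Consequently your Mukai-vector argument (``identity on $K_0$ forces the Fourier--Mukai kernel to be the graph of an automorphism'') breaks down. Of course $T^2$ does lie in the group generated by the listed functors, but proving that \emph{every} element of the kernel is such a product is precisely the hard categorical content of the theorem, and it is invisible to Chern-character bookkeeping. This is exactly why Broomhead--Ploog do not argue through the cohomological representation: their proof tracks where an equivalence sends skyscraper sheaves, classifies the point-like and spherical objects supported on the $A_n$-chains of $(\mns 2)$-curves (building on Ishii--Uehara), corrects the given equivalence by explicit twists $T_{\OO_C(i)}$ until it sends points to points, and only then concludes by the standard criterion that such an equivalence is a composition of an automorphism, a line-bundle twist and a shift.

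There is a second error in your image step: reflections in \emph{effective} $(\mns 2)$-classes do not generate $W_X$. By Proposition~\ref{prop:kiso:weyl}, $W_X$ is generated by reflections in all roots, and most roots are not classes of actual curves. On the very surface $X$ of Section~\ref{sec:nonconstr:exseq} there are exactly two $(\mns 2)$-curves, $D_1$ and $D_6$, and they are disjoint, so the reflections realised by honest spherical twists generate a group of order $4$ inside $W_X$ of order $120$; the entire point of the paper is that most $K$-isometries do \emph{not} lift to autoequivalences. Your closing paragraph correctly flags effectivity as the obstacle, but the proposed fix --- induction down to $\PP^2$ and $\FF_{\!r}$ --- cannot repair this, because the theorem does not assert that the full Weyl group is realised, and blowing up creates non-effective roots that no reduction to minimal models will make effective. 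The correct statement is only that the image of $\Aut(\Db(X))$ is generated by the standard isometries together with reflections in effective roots, and establishing that again requires the categorical analysis above, not lattice theory.
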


Let $C$ be a $(\mns 2)$-curve on a (smooth, projective) surface $X$, i.e. $C \cong \PP^1$,
$C^2 = \mns 2$ and $C.K_X = 0$. Then $\OO_C(\mns 1)$ is a spherical object in $\Db(X)$, see \cite[Example 8.10, iii)]{huybrechts}. We are interested in applying the according spherical twist to line bundles $\OO(D)$. 

\begin{prop}[{\cite[Lemma 8.12]{huybrechts}}]
\label{prop:sph:pic}
Let $X$ be a smooth projective rational surface and $C$ a $(\mns 2)$-curve on $X$.
Then the spherical twist $T_{\OO_C(\mns 1)}$ induces a $K$-isometry on $\Pic(X)$ when passing to cohomology.
\end{prop}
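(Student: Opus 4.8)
The plan is to compute the induced action of $T_{\OO_C(-1)}$ on the Grothendieck group $K_0(X)$, read off its effect on $H^2(X)=\Pic(X)$, and recognise the result as one of the reflections appearing in Proposition~\ref{prop:kiso:weyl}. Since $X$ is rational, $H^{\mathrm{odd}}(X)=0$ and $\Pic(X)\cong H^2(X,\ZZ)$, so ``passing to cohomology'' really means looking at the action on the degree-$2$ part, and the induced map is automatically an automorphism because $T_{\OO_C(-1)}$ is an autoequivalence.

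First I would set $E=\OO_C(\mns1)$ and compute its Chern character. From the structure sequences $0\to\OO_X(\mns C)\to\OO_X\to\OO_C\to0$ and $0\to\OO_C(\mns1)\to\OO_C\to\OO_p\to0$ for a point $p\in C$, together with $C^2=\mns2$, one finds $\ch(\OO_C)=C+[\mathrm{pt}]$ and hence $\ch(E)=C$, a class concentrated in $H^2(X)$ (zero rank, zero degree-$4$ part). As a consistency check, $C.K_X=0$ is exactly what makes $E$ spherical, matching the setup.

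Next, the defining triangle \eqref{eq:twist:triangle} is additive in $K_0(X)$, so for any $F$ one has $[T_E(F)]=[F]-\chi(E,F)\,[E]$, where $\chi(E,F)=\sum_i(\mns1)^i\dim\Ext^i(E,F)$ is the Euler pairing. Taking $F=\OO(D)$ and evaluating $\chi(E,F)$ by Hirzebruch--Riemann--Roch, only the degree-$2$ part of $\ch(F)\td(X)$ contributes against $\ch(E)^\vee=\mns C$; the relation $C.K_X=0$ kills the Todd-class term and leaves $\chi(E,\OO(D))=\mns\,C.D$. Passing to $\ch_1$, i.e. to $\Pic(X)=H^2(X)$, then gives the linear map $D\mapsto D+(C.D)\,C$.

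Finally I would observe that this is precisely the reflection $s_{\OO(C)}$ of Proposition~\ref{prop:kiso:weyl}, since $\OO(C)\in\Phi_X$ (because $C^2=\mns2$ and $C.K_X=0$). Such a reflection preserves the intersection form and fixes $K_X$, hence is a $K$-isometry, which is the claim. The main point to get right is the bookkeeping in the Hirzebruch--Riemann--Roch computation: tracking which terms survive and invoking $C.K_X=0$ at the correct place, so that the action genuinely lands in $\Pic(X)$ and is linear in $D$. Everything else is formal, and indeed this essentially reproduces \cite[Lemma 8.12]{huybrechts}.
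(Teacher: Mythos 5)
Your proposal is correct and follows essentially the same route as the paper: pass to $K_0(X)$ via the twist triangle, identify $\ch(\OO_C(\mns 1)) = [C]$, compute $\chi(\OO_C(\mns 1),\OO(D)) = \mns\, C.D$ using $C^2=\mns 2$ and $C.K_X=0$, and recognise $[D] \mapsto [D] + (C.D)\cdot[C]$ as the reflection $s_{\OO(C)}$, hence a $K$-isometry. The only difference is bookkeeping in the Euler pairing: the paper resolves $\OO_C(\mns 1)$ by line bundles (choosing $H$ with $C.H=1$) and applies Riemann--Roch to those, whereas you apply Hirzebruch--Riemann--Roch directly with $\ch(\OO_C(\mns 1))^\vee = \mns[C]$; both computations agree.
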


\begin{proof}
We present a proof very similar to the one found in [loc.~sit.], which is slightly more adapted to the specific situation.

We will now pass to cohomology, roughly along the lines drawn in \cite[Section 5.2]{huybrechts}.
First, the triangle \eqref{eq:twist:triangle} becomes  in this situation
\[
T^K_{\OO_C(\mns 1)}(\OO(D)) = \OO(D) - \chi(\OO_C(\mns 1),\OO(D)) \cdot \OO_C(\mns 1)
\]
under the projection $\Db(X) \to K_0(X)$ to the Grothendieck group.

In the next step, we apply the Chern character $\ch\colon K_0(X) \to H^\bullet(X,\QQ)$.
We note that $\ch(\OO_C(\mns 1)) = [C] \in H^2(X,\QQ)$,
where $[C]$ denotes the class of the curve $C$.
For a divisor $D$, the cohomological version of the twist is
\[
T^H_{[C]}([D]) = [D] - \chi(\OO_C(\mns 1),\OO(D)) \cdot [C].
\]
To compute the Euler pairing above, we choose a divisor $H$ on $X$ with $C.H=1$. Then $\OO_C(\mns 1) = \OO_C(\mns H)$
and there is the exact sequence
\[
0 \to \OO(-C-H) \to \OO(\mns H) \to \OO_C(\mns 1) \to 0,
\]
which induces the equality
$
\chi(\OO_C(\mns 1),\OO(D)) 
= \chi(\OO(D+H)) - \chi(\OO(D+C+H)).
$
By using Riemann-Roch and the assumptions on $C$ and $H$, we get
\begin{equation}
\label{eq:euler:cd}
\chi(\OO_C(\mns 1),\OO(D)) 
= - C.D - \frac{1}{2} C^2 - C.H - \frac{1}{2} C.K = \mns\,C.D.
\end{equation}
So the cohomological twist becomes
\[
T^H_{[C]}([D]) = [D] + C.D \cdot [C].
\]
Since $X$ is rational, $H^2(X,\ZZ)$ is isomorphic to $\Pic(X)$. Therefore,
$T^H_{[C]}$ is a $K$-isometry of $\Pic(X)$ by Proposition~\ref{prop:kiso:weyl}.
\end{proof}

\begin{rmk}
The statement can also reformulated in the following way: 
a $K$-isometry of the form $s_{\OO(C)}\colon \OO(D) \mapsto \OO(D) + (C.D)\cdot \OO(C)$ for a $(\mns 2)$-curve $C$ can be lifted to the autoequivalence $T_{\OO_C(\mns 1)}$ of $\Db(X)$.
\end{rmk}

Although the projection $T^H_{[C]}$ becomes an automorphism of $\Pic(X)$, the image of a line bundle $L$ under $T_{\OO_C(\mns 1)}$ will be a line bundle only in very specific situations.

\begin{lem}
\label{lem:twist:twoforms}
Let $X$ be a smooth, projective surface and $C$ a $(\mns 2)$-curve on $X$. Then the spherical twist $T_{\OO_C(\mns 1)}$ maps a line bundle $L = \OO(D)$ to another line bundle $L'$ if and only if one of the following two cases apply
\begin{enumerate}[(a)]
\item $D.C = 0$, in which case $L$ and $L'$ are equal, or
\label{it:twist:same}
\item $D.C = 1$, in which case there is an exact sequence
\label{it:twist:exactseq}
\[
0 \to L \to L\otimes\OO(C) \to \OO_C(\mns 1) \to 0,
\]
and $L' = L \otimes \OO(C)$.
\end{enumerate}
\end{lem}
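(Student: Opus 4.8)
The plan is to read the twist $T_{\OO_C(\mns 1)}(L)$ directly off the defining triangle \eqref{eq:twist:triangle} with $E = \OO_C(\mns 1)$ and $F = L = \OO(D)$; the whole problem reduces to understanding the graded vector space $\bigoplus_i \Ext^i(\OO_C(\mns 1), \OO(D))$. I would compute its three components as follows. Since $\OO_C(\mns 1)$ is a torsion sheaf and $\OO(D)$ is torsion-free, $\Hom(\OO_C(\mns 1), \OO(D)) = 0$. By Serre duality on the surface together with the spherical identity $\OO_C(\mns 1) \otimes K_X \cong \OO_C(\mns 1)$ (equivalently $K_X|_C \cong \OO_C$, which follows from adjunction and $C^2 = \mns 2$), one gets $\Ext^2(\OO_C(\mns 1), \OO(D)) \cong \Hom(\OO(D), \OO_C(\mns 1))^\ast \cong H^0(C, \OO_C(\mns C.D \mns 1))^\ast$, which is nonzero precisely when $C.D \leq \mns 1$, of dimension $\mns C.D$. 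Finally the Euler pairing is pinned down by \eqref{eq:euler:cd}, namely $\chi(\OO_C(\mns 1), \OO(D)) = \mns C.D$, so $\dim \Ext^1(\OO_C(\mns 1),\OO(D)) = C.D + \dim \Ext^2(\OO_C(\mns 1),\OO(D))$. This produces a trichotomy: for $C.D = 0$ all three groups vanish; for $C.D \geq 1$ only $\Ext^1$ survives, with dimension $C.D$; and for $C.D \leq \mns 1$ only $\Ext^2$ survives, with dimension $\mns C.D$.

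Feeding this into \eqref{eq:twist:triangle} settles the two positive cases. If $C.D = 0$ the left-hand term vanishes and $T_{\OO_C(\mns 1)}(L) \cong L$, which is case \eqref{it:twist:same}. If $C.D = 1$ the triangle reduces to $\OO_C(\mns 1)[\mns 1] \to \OO(D) \to T_{\OO_C(\mns 1)}(L) \to \OO_C(\mns 1)$; here the evaluation map is the canonical one, which is the identity under $\Hom(\Ext^1(E,F) \otimes E[\mns 1], F) \cong \Hom(\Ext^1(E,F), \Ext^1(E,F))$ and hence nonzero, so its cone is a \emph{nonsplit} extension $0 \to \OO(D) \to G \to \OO_C(\mns 1) \to 0$. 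To identify $G$ I would tensor the structure sequence $0 \to \OO(\mns C) \to \OO_X \to \OO_C \to 0$ by $\OO(D+C)$ and use $(D+C).C = C.D + C^2 = \mns 1$ to obtain $\OO_C \otimes \OO(D+C) \cong \OO_C(\mns 1)$, which is exactly the exact sequence of \eqref{it:twist:exactseq}. Since $\Ext^1(\OO_C(\mns 1), \OO(D))$ is one-dimensional, every nonzero class yields the same extension, so $G \cong L \otimes \OO(C)$, establishing case \eqref{it:twist:exactseq}.

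It remains to exclude the other values of $C.D$. When $C.D \leq \mns 1$ the left-hand term of \eqref{eq:twist:triangle} is $\OO_C(\mns 1)^{\oplus(\mns C.D)}[\mns 2]$, and the long exact sequence of cohomology sheaves of the triangle gives $\mathcal{H}^0(T_{\OO_C(\mns 1)}(L)) = \OO(D)$ and $\mathcal{H}^1(T_{\OO_C(\mns 1)}(L)) = \OO_C(\mns 1)^{\oplus(\mns C.D)} \neq 0$, so the twist is a genuine two-term complex and not a line bundle. When $C.D \geq 2$ the same long exact sequence shows $T_{\OO_C(\mns 1)}(L)$ is a sheaf $G$ fitting into $0 \to \OO(D) \to G \to \OO_C(\mns 1)^{\oplus C.D} \to 0$, and here I expect the main obstacle: ruling out that $G$ is a line bundle, since one checks that $G$ has the same class in $K_0(X)$ as the line bundle $\OO(D + (C.D)C)$, so no numerical invariant can separate them. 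I would instead argue by contradiction using local structure — if $G$ were invertible, the inclusion $\OO(D) \hookrightarrow G$ of line bundles would exhibit the quotient $G/\OO(D)$ as a locally cyclic sheaf, whereas $\OO_C(\mns 1)^{\oplus C.D}$ needs $C.D \geq 2$ generators at each point of $C$, a contradiction. Collecting the four regimes gives the stated equivalence.
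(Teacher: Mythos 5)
Your proof is correct, but it takes a genuinely different route from the paper's. The paper splits the argument into an ``only if'' part (assuming $T_{\OO_C(\mns 1)}(L)$ is a line bundle, it argues structurally that $L \to L'$ must be injective with cokernel either $0$ or $\OO_C(\mns 1)$, and then reads off the intersection numbers from \eqref{eq:euler:cd}) and an ``if'' part that leans on the Broomhead--Ploog result (Proposition~\ref{prop:thm:six}): in case (b) it first shows $\bigl(L, L\otimes\OO(C)\bigr)$ is a special exceptional pair and from that deduces $\Ext^i(\OO_C(\mns 1),L)=\CC$ exactly for $i=1$, so the structure sequence fits into the twist triangle. You instead compute the full graded space $\Ext^\bullet(\OO_C(\mns 1),\OO(D))$ once and for all --- vanishing of $\Hom$ by torsion versus torsion-free, $\Ext^2$ by Serre duality plus the spherical identity $\OO_C(\mns 1)\otimes K_X\cong\OO_C(\mns 1)$, and $\Ext^1$ from the Euler characteristic \eqref{eq:euler:cd} --- obtaining a clean trichotomy in $C.D$, and then you read the twist off the triangle \eqref{eq:twist:triangle} in each of the four regimes: isomorphism for $C.D=0$, the unique nonsplit extension (identified with the twisted structure sequence, since $\Ext^1$ is one-dimensional) for $C.D=1$, a genuine two-term complex for $C.D\leq\mns 1$, and for $C.D\geq 2$ a sheaf that cannot be invertible because the cokernel of an injection of line bundles on a smooth surface is locally cyclic while $\OO_C(\mns 1)^{\oplus C.D}$ needs $C.D\geq 2$ generators along $C$. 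What your approach buys: it is self-contained (no appeal to Proposition~\ref{prop:thm:six}), it proves both directions simultaneously by exhausting all values of $C.D$, and it yields as a byproduct an explicit description of $T_{\OO_C(\mns 1)}(\OO(D))$ for \emph{every} $D$, not only when the image is a line bundle; in fact your local-cyclicity argument makes precise the step the paper compresses into ``there are only two situations where this can happen.'' What the paper's approach buys: it is shorter, reuses established machinery, and its structural ``only if'' direction avoids the regime-by-regime bookkeeping entirely.
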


Before we prove the statement, we note

\begin{prop}[{\cite[Theorem 6, (ii)]{broom:ploog}}]
\label{prop:thm:six}
Consider an exact triangle $E' \to E \to S$ in $\Db(X)$ for a smooth, projective surface over an algebraically closed field $k$. If $E$ is exceptional, $S$ spherical and $\Hom^\bullet(E,S) = k$, then $(E',E)$ is a so-called \emph{special exceptional pair}, i.e. $(E',E)$ form a exceptional sequence and $\Hom^\bullet(E',E) = k \oplus k[\mns 1]$.
\end{prop}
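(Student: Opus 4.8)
The plan is to extract every assertion from the long exact sequences obtained by applying $\Hom^\bullet(E,-)$, $\Hom^\bullet(-,E)$ and $\Hom^\bullet(S,-)$ to the given triangle, combined with a single application of Serre duality that uses the spherical hypothesis. Throughout I abbreviate $\Hom^i(A,B) = \Hom(A,B[i])$ and write the triangle as $E' \xrightarrow{f} E \xrightarrow{g} S \xrightarrow{h} E'[1]$.

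First I would record two preliminary facts. The map $g$ is nonzero: an exceptional object satisfies $\Hom^0(E,E) = k$ and is in particular indecomposable, whereas $g = 0$ would split the triangle into $E \cong E' \oplus S[-1]$, and the spherical summand $S[-1]$ would contribute a second idempotent to $\Hom^0(E,E)$ (or force $E \cong S[-1]$, which is impossible since $\Hom^\bullet(S,S) = k \oplus k[-2]$ is two-dimensional). As $\Hom^0(E,S) = k$, the class of $g$ spans it. Second, I compute $\Hom^\bullet(S,E)$ by Serre duality on the surface $X$: from $\Hom^i(S,E) \cong \Hom^{2-i}(E, S \otimes K_X)^*$, the spherical identity $S \otimes K_X \cong S$, and the hypothesis that $\Hom^\bullet(E,S) = k$ is concentrated in degree $0$, I obtain $\Hom^\bullet(S,E) = k[-2]$. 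This is the one place where the spherical assumption genuinely enters, and it is the linchpin of the whole argument.

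With these in hand the two off-diagonal statements fall out of long exact sequences. Applying $\Hom^\bullet(E,-)$, the induced map $g_* \colon \Hom^0(E,E) \to \Hom^0(E,S)$ sends $\mathrm{id}_E \mapsto g$ and is therefore an isomorphism of one-dimensional spaces; since $\Hom^\bullet(E,E)$ and $\Hom^\bullet(E,S)$ are both concentrated in degree $0$, the sequence collapses to give $\Hom^\bullet(E,E') = 0$, which is exactly the ``no morphisms back'' condition for the pair $(E',E)$. Applying $\Hom^\bullet(-,E)$ and feeding in $\Hom^\bullet(E,E) = k[0]$ together with $\Hom^\bullet(S,E) = k[-2]$, the long exact sequence yields $\Hom^0(E',E) \cong \Hom^0(E,E) = k$ and $\Hom^1(E',E) \cong \Hom^2(S,E) = k$, with all other degrees vanishing; that is, $\Hom^\bullet(E',E) = k \oplus k[-1]$, the ``special'' part of the statement.

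It remains to show that $E'$ is exceptional, and this is where I expect the only real subtlety. The cleanest route is to observe that, because $\Hom^\bullet(E,S) = k$ and $g$ spans $\Hom^0(E,S)$, the given triangle is precisely the defining triangle of the inverse spherical twist applied to $E$, so $E' \cong T_S^{-1}(E)$; since $T_S^{-1}$ is an autoequivalence of $\Db(X)$ it preserves all graded $\Hom$-spaces, whence $\Hom^\bullet(E',E') \cong \Hom^\bullet(E,E) = k$ (this conclusion is robust even if the convention introduces a shift, since a shift of an exceptional object is again exceptional). Alternatively one can stay inside the long exact sequences: applying $\Hom^\bullet(-,E')$ and using the already-established $\Hom^\bullet(E,E') = 0$ gives $\Hom^i(E',E') \cong \Hom^{i+1}(S,E')$, and $\Hom^\bullet(S,E')$ is computed from the $\Hom^\bullet(S,-)$-sequence once one knows that $g_* \colon \Hom^2(S,E) \to \Hom^2(S,S)$ is an isomorphism. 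The main obstacle is exactly this last point, since a priori that induced map could vanish; I would pin it down by dualizing through Serre duality, which turns $g_*$ into precomposition with $g$ on $\Hom^0(E,S)$, an isomorphism because $\Hom^0(E,S) = k \cdot g$. Either way one arrives at $\Hom^\bullet(E',E') = k[0]$, and combining this with $\Hom^\bullet(E,E') = 0$ and $\Hom^\bullet(E',E) = k \oplus k[-1]$ shows that $(E',E)$ is a special exceptional pair.
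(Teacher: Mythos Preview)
The paper does not prove this proposition; it is quoted from \cite{broom:ploog} and used as a black box in the proof of Lemma~\ref{lem:twist:twoforms}. So there is no argument in the paper to compare against, and I simply assess yours on its own merits.

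Your proof is essentially correct. The Serre-duality computation $\Hom^\bullet(S,E)=k[-2]$ and the two long exact sequences yield $\Hom^\bullet(E,E')=0$ and $\Hom^\bullet(E',E)=k\oplus k[-1]$ exactly as you say, and either route to the exceptionality of $E'$ --- the identification $E'\cong T_S^{-1}(E)$ as an image under an autoequivalence, or the further long exact sequence together with the Serre-dual identification of $g_*$ in top degree --- goes through cleanly.

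One slip worth flagging: if $g=0$ the triangle splits as $E'\cong E\oplus S[-1]$, not $E\cong E'\oplus S[-1]$, so your indecomposability-of-$E$ argument does not actually exclude this case. In fact nothing in the hypotheses as literally stated does: the distinguished triangle $E\oplus S[-1]\to E\xrightarrow{0}S$ satisfies every assumption, yet its $E'$ is visibly not exceptional. The nondegeneracy $g\neq 0$ is tacit in \cite{broom:ploog}, where the triangle is the (co)twist triangle and the map is the evaluation; once you take $g\neq 0$ as part of the hypothesis (as you effectively do), your proof is complete.
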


\begin{proof}[Proof of Lemma~\ref{lem:twist:twoforms}]

For the ``only if''-part, suppose
that $T_{\OO_C(\mns 1)}(L)$ is a line bundle $L'$.
Looking at the triangle in \eqref{eq:twist:triangle}, we notice that $\Hom(\OO_C(\mns 1),L)$ vanishes, since any map from a torsion to a locally free sheaf is zero.
Moreover, the kernel of the map $L \to L'$ is either $0$ or $L$ again.
Due to the form of the triangle \eqref{eq:twist:triangle} the second case cannot occur, so $L \to L'$ is injective. 
Actually, there are only two situations where this can happen. 
Either $L = L'$ and so $\Ext^i(\OO_C(\mns 1),L)$ vanishes for all $i$, 
or $\OO_C(\mns 1)$ is the cokernel of $L \hookrightarrow L'$. Then $L' = L\otimes\OO(C)$ and $\Ext^1(\OO_C(\mns 1),L)$ has to be one-dimensional and $\Ext^2(\OO_C(\mns 1),L)$ vanishes.
Using the information about the $\Ext$-groups combined with equation~\eqref{eq:euler:cd}, we deduce also the statements about the intersections.

For the ``if''-part we start with the case \eqref{it:twist:exactseq}. We note that
\[
\Hom^\bullet(L\otimes\OO(C),\OO_C(\mns 1)) = 
\Hom^\bullet(L\otimes\OO(C),\OO_C \otimes L \otimes \OO(C)) = 
H^\bullet(X,\OO_C),
\]
using for the first equality the fact that $(D+C).C=\mns 1$ with $L = \OO(D)$. So the assumptions of Proposition~\ref{prop:thm:six} are fulfilled.
As a consequence,
\[ 
\Hom^\bullet(L\otimes\OO(C),L) = H^\bullet(X,\OO(\mns C)) = 0
\]
due to the exceptionality, and due to the speciality,
\[
\Hom^\bullet(L,L\otimes\OO(C)) = H^\bullet(X,\OO(C)) = \CC \oplus \CC[\mns 1] .
\]
If we apply now $\Hom(-,L)$ to the exact sequence \eqref{it:twist:exactseq}, 
we deduce that 
\[
\Ext^i(\OO_C(\mns 1),L) \cong \Ext^{i-1}(L,L) = 
\begin{cases} 
\CC & \text{ if } i = 1,\\
0 & \text{ otherwise.}
\end{cases}
\]
So the exact sequence fits nicely in the triangle \eqref{eq:twist:triangle} of the spherical twist.

Now we turn our attention to case \eqref{it:twist:same}. Using Serre duality and $K_X.C = 0 = L.C$
we deduce
\begin{multline*}
\Hom^\bullet(\OO_C(\mns 1),L) \cong 
\Hom^{n-\bullet}(L,\OO_C(\mns 1) \otimes K_X) = \\
= H^{n-\bullet}(X,\OO_C(\mns 1) \otimes K_X \otimes L ) =
H^{n-\bullet}(C,\OO_C(\mns 1)) = 0.
\end{multline*}
So the left corner of the triangle \eqref{eq:twist:triangle} is zero and $T_{\OO_C(\mns 1)}(L)=L$.
\end{proof}

\begin{rmk}
Actually, the case \eqref{it:twist:same} of Lemma~\ref{lem:twist:twoforms} is an instance of the general property $T_E(F) = F$ for any spherical object $E \in \Db(X)$ and $F \in E^\perp$.
Note that in case \eqref{it:twist:exactseq} with $T_{\OO_C(\mns 1)} L = L\otimes\OO(C)$, twisting
$L\otimes\OO(C)$ will \emph{not} result in another line bundle, since $(D+C).C = \mns 1$ for $L = \OO(D)$.
\end{rmk}

\section{Non-Constructible Exceptional Sequences}
\label{sec:nonconstr:exseq}

In \cite[Section 5.4]{me:ilten:11b}, an example was given of an exceptional toric system $\Ts$ which was \emph{not} constructible.
There, a subtle detail was omitted, namely, whether the corresponding exceptional sequence is \emph{full}. 
Using spherical twists, we can address this question.

We will concentrate on the toric surface $X=\TV(\mns 2,\mns 1,\mns 1,\mns 1,\mns 1,\mns 2,\mns 1)$.
First, we fix a good basis $(H,R_1,R_2,R_3,R_4)$ of $\Pic(X)$. For that we choose a way of blowing down $X$ to a Hirzebruch surface, say, as in Figure~\ref{fig:blowdown:X}.
\begin{figure}
\figBlowdownX
\caption{Blowing down $X = \TV(\protect\mns 2,\protect\mns 1,\protect\mns 1,\protect\mns 1,\protect\mns 1,\protect\mns 2,\protect\mns 1)$ to $\FF_1$.}
\label{fig:blowdown:X}
\end{figure}
Using this way, we get the good basis
\[
H = D_1+D_2+D_3+D_7,\ R_1 = D_3,\ R_2 = D_1+D_7,\ R_3 = D_5,\ R_4 = D_7.
\]
We can express the standard exceptional toric system $\Ts_X=(D_1,\ldots,D_7)$ on $X$ with respect to this basis and get
\[
\Ts_X
\ =\ 
\begin{pmatrix}
H & R_1 & R_2 & R_3 & R_4
\end{pmatrix}
\cdot
\scalebox{0.8}{$
\begin{pmatrix}
 0 &  1 &  0 &  1 &  0 &  1 &  0 \\ 
 0 & \mns 1 &  1 & \mns 1 &  0 &  0 &  0 \\ 
 1 & \mns 1 &  0 &  0 &  0 & \mns 1 &  0 \\ 
 0 &  0 &  0 & \mns 1 &  1 & \mns 1 &  0 \\ 
\mns 1 &  0 &  0 &  0 &  0 & \mns 1 &  1 \\ 
\end{pmatrix}$} \text{ . }
\]
By applying all $K$-isometries of $\Pic(X)$ to $\Ts$, we obtain $120$ toric systems.
It turns out that $98$ of them are actually exceptional. If we choose as a basis of $\Pic(X)$ the exceptional divisors $D_2, D_3, D_4, D_5$ and $D_7$, we can check which toric systems are constructible in the following way.

Since every exceptional toric system is constructible on a toric surface of Picard rank $4$ due to \cite[Theorem 5.4]{me:ilten:11b}, it is sufficient to determine whether the exceptional toric systems are augmented once. 
So we need to check whether the toric system $\Ts$ contains at least one exceptional divisor $R$ at, say position $i$, such that ``de-augmentation'' $(A_1, \ldots, A_{i-2}, A_{i-1}+R, A_{i+1}+R, A_{i+2}, \ldots, A_n)$ is contained in $R^\perp$, see Remark~\ref{rmk:exdiv:orth} and Definition~\ref{defn:augment}.

Checking which of these toric systems are exceptional or augmented was done using the {\tt ToricVectorBundles} package for Macaulay~2, see \cite{tvb}.
The corresponding Macaulay~2 script is available online at:
\begin{quote}
{\url{http://page.mi.fu-berlin.de/hochen/exseq-twist.m2}}
\end{quote}
The result is that almost all exceptional toric systems are constructible, but there are two exceptions
\[
\Ts = 
\begin{pmatrix}
D_2 & D_3 & D_4 & D_5 & D_7
\end{pmatrix}
\cdot
\scalebox{0.8}{$
\begin{pmatrix}
0  &1 & \mns 1 &1  &0 & 0 & 0  \\
\mns 1 &1 & 0  &1  &0  &0  &1  \\
0  &0  &1  &0  &1  &\mns 1 &1  \\
0  &0  &1  &\mns 1 &1  &0  &0  \\
1  &\mns 1 &0  &0  &\mns 1 &1  &\mns 1 \\
\end{pmatrix}$}
\]
and $f^\ast \Ts$, where $f$ is the automorphism of $X$ depicted in Figure~\ref{fig:auto:x} and where we apply the pull-back component-wise. Since $f^\ast$ itself induces a $K$-isometry on $\Pic(X)$, we can restrict our attention to $\Ts$.
\begin{figure}
\figAutoX
\caption{An automorphism $f$ of $X= \TV(\protect\mns 2,\protect\mns 1,\protect\mns 1,\protect\mns 1,\protect\mns 1,\protect\mns 2,\protect\mns 1)$.}
\label{fig:auto:x}
\end{figure}
The exceptional sequence corresponding to $\Ts$ is
\[
\Ex =
\begin{pmatrix}
D_2 & D_3 & D_4 & D_5 & D_7
\end{pmatrix}
\cdot
\scalebox{0.8}{$
\begin{pmatrix}
0& 0&  1& 0& 1& 1&  1 \\
0& \mns 1& 0& 0& 1& 1&  1 \\
0& 0&  0& 1& 1& 2&  1 \\
0& 0&  0& 1& 0& 1&  1 \\
0& 1&  0& 0& 0& \mns 1& 0 \\
\end{pmatrix}$} \text{ . }
\]
There are two $(\mns 2)$-curves on $X$, namely, $D_1$ and $D_6$. Since $f^\ast(\OO(D_1)) = \OO(D_6)$ we can restrict to the spherical twist $T = T_{\OO_{D_1}(\mns 1)}$. Actually, this twist fulfills the assumptions of Lemma~\ref{lem:twist:twoforms} and maps the exceptional sequence $\Ex$ to the constructible exceptional sequence
\[
T (\Ex) = 
\begin{pmatrix}
D_2 & D_3 & D_4 & D_5 & D_7
\end{pmatrix}
\cdot
\scalebox{0.8}{$
\begin{pmatrix}
0& 1&  1& 1&  1& 1&  2  \\
0& 0&  0& 1&  1& 1&  2  \\
0& 0&  0& 1&  1& 2&  1  \\
0& \mns 1& 0& 0&  0& 1&  0  \\
0& 0&  0& \mns 1& 0& \mns 1& \mns 1 \\
\end{pmatrix}$} \text{ . }
\]
So we have shown, that $\Ex$ and $f^\ast \Ex$ are \emph{full} non-constructible exceptional sequences on $X$.
Further computational evidence has led to the formulation of the following

\begin{conj}
Given an exceptional sequence $\Ex$ of line bundles on a toric surface $X$ of length $n = \rk K_0(X)$. 
Then $X$ is either constructible or differs by a finite sequence of spherical twists $T_{\OO_C(\mns 1)}$ from a constructible one.
Consequently, $\Ex$ is automatically \emph{full}.
\end{conj}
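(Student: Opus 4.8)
The plan is to split the conjecture into its genuinely open core---the dichotomy that every $K$-full exceptional sequence of line bundles is either constructible or related to a constructible one by finitely many twists $T_{\OO_C(\mns 1)}$---and the formal consequence that such a sequence is full. The ``consequently'' part is immediate from what is already in place: each spherical twist is an autoequivalence of $\Db(X)$, hence preserves the property of generating $\Db(X)$, and a constructible sequence is full by Proposition~\ref{prop:aug:exfull}. So if $\Ex' = T(\Ex)$ is constructible for $T = T_{\OO_{C_k}(\mns 1)} \cdots T_{\OO_{C_1}(\mns 1)}$, then $\langle \Ex \rangle = T^{-1}\langle \Ex' \rangle = T^{-1}\Db(X) = \Db(X)$, so $\Ex$ is full. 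All the weight therefore rests on the dichotomy, and that is what I would try to prove.

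For the dichotomy I would argue by induction on the Picard rank $\rho(X)$, after normalising away the trivial symmetries (tensor so that $E_1 = \OO_X$, and use rotation and mirroring), which reduces the statement to exceptional toric systems $\Ts$ on $X$. The base case $\rho(X)\le 2$ is Example~\ref{ex:hirze:exseq}. For the inductive step, recall the de-augmentation criterion: if $\Ts$ contains the exceptional divisor $R$ of some blow-down $X \to X_0$ at a position $i$ for which the de-augmented sequence lies in $R^\perp = \Pic(X_0)$, then $\Ts = \Aug_i \Ts_0$ for an exceptional toric system $\Ts_0$ on $X_0$; by the inductive hypothesis $\Ts_0$ is twist-equivalent to a constructible system, and one then propagates this up the blow-up. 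The latter requires checking that an admissible $(\mns 2)$-twist on $X_0$ has a counterpart on $X$ compatible with augmentation---a delicate but, I expect, manageable point, since a $(\mns 2)$-curve on $X_0$ whose class avoids the blow-up centre has a $(\mns 2)$-curve strict transform on $X$. The problem is thereby reduced to the systems $\Ts$ admitting no de-augmentation.

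The heart of the argument is then a single existence lemma: for every non-de-augmentable exceptional toric system $\Ts$ on $X$ there is a $(\mns 2)$-curve $C$ (in the worked example $D_1$ or $D_6$, in general one of the $T$-invariant $(\mns 2)$-curves) such that, first, every line bundle of the underlying sequence $\Ex$ meets $C$ in $0$ or $1$, so that Lemma~\ref{lem:twist:twoforms} applies and $T_{\OO_C(\mns 1)}(\Ex)$ is again a sequence of line bundles, and second, the twisted system is strictly closer to a de-augmentable one for some complexity measure, so that iterating terminates. Granting this lemma, repeated admissible twists carry $\Ts$ into a de-augmentable system, which the previous paragraph handles, and the induction closes.

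The main obstacle is precisely this existence lemma, and especially its termination clause. For $3 \le \rho(X) \le 9$ the group of $K$-isometries $W_X$ is finite (types $A_1$, $A_1\times A_2$, $A_4$, $D_5$, $E_6$, $E_7$, $E_8$ respectively), so each Weyl orbit of exceptional toric systems is finite and the lemma can in principle be checked by a finite, computer-assisted traversal of the orbit graph whose edges are the admissible $(\mns 2)$-twists---exactly the kind of verification carried out for $\rho(X)=5$ above, and what the cited Macaulay~2 computations support. For $\rho(X) \ge 10$, however, $W_X$ becomes infinite (affine, then hyperbolic, $E_8$), the orbits are no longer finite, and the restriction in Lemma~\ref{lem:twist:twoforms}---only curves $C$ meeting \emph{every} bundle in $0$ or $1$ are allowed---means one cannot freely navigate the orbit. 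I do not see an evident monotone quantity (a bounded-below degree, or an intersection-theoretic norm) that both decreases under an admissible twist and certifies de-augmentability in the limit; producing such a quantity, or otherwise establishing termination in the infinite-Weyl-group regime, is where the real difficulty lies, and is why the statement is recorded here as a conjecture rather than a theorem.
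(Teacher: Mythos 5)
This statement is a \emph{conjecture}: the paper offers no proof of it, only the computational evidence of Section~\ref{sec:nonconstr:exseq} (the single Picard-rank-$5$ surface $X=\TV(\mns 2,\mns 1,\mns 1,\mns 1,\mns 1,\mns 2,\mns 1)$, where the two non-constructible exceptional toric systems are shown to become constructible after one twist $T_{\OO_{D_1}(\mns 1)}$). So there is no ``paper's own proof'' to compare against. Your treatment of the ``consequently'' clause is correct and is exactly the reasoning the paper uses in its example: spherical twists are autoequivalences, constructible implies full by Proposition~\ref{prop:aug:exfull}, hence a sequence twist-equivalent to a constructible one is full. You also correctly identify that all the substance lies in the dichotomy.

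But your proposal is not a proof of the dichotomy, and you say so yourself: the ``existence lemma'' (an admissible $(\mns 2)$-twist always exists for a non-de-augmentable system, together with a terminating complexity measure) is precisely the open content of the conjecture, and you leave it unestablished --- in particular for $\rho(X)\geq 10$, where $W_X$ is infinite and no finite orbit traversal is available. There is a second unproven step you gloss over as ``delicate but manageable'': propagating twist-equivalence through a blow-up in the inductive step. A $(\mns 2)$-curve on $X_0$ passing through the blow-up centre has a $(\mns 3)$-curve as strict transform on $X$, so the needed compatibility between admissible twists downstairs and upstairs is not automatic, and the composite of the pulled-back twists need not be of the form $T_{\OO_C(\mns 1)}$ demanded by the statement. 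In short: your decomposition of the problem is sound and consistent with the paper's evidence, your reduction of fullness to the dichotomy is correct, but the core of the conjecture remains exactly as open in your proposal as it is in the paper.
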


\bibliographystyle{alpha}
\bibliography{sph-twist}

\end{document}